\let\oldcite\cite
\renewcommand{\cite}[1]{{\color{red}\oldcite{#1}}}
\let\oldcitep\citep
\renewcommand{\citep}[1]{{\color{red}\oldcitep{#1}}}
\let\oldcitet\citet
\renewcommand{\citet}[1]{{\color{red}\oldcitet{#1}}}
\newtheorem{theorem}{Theorem}[section]
\newtheorem{lemma}[theorem]{Lemma}
\theoremstyle{definition}
\newtheorem{example}[theorem]{Example}
\theoremstyle{remark}
\newtheorem{remark}[theorem]{Remark}
\newtheorem{corollary}[theorem]{Corollary}
\newcommand{\nd}{\noindent}
\newcommand{\norm}[1]{\left\lVert#1\right\rVert}
\newcommand{\Ddim}{\mathbb{R}^D}
\begin{document}
\pagenumbering{arabic}

\title[Identifiability properties of the unnormalized graph Laplace operators]{\large \textbf{Identifiability of the unnormalized graph Laplace operators}}
\date{\today}

\maketitle
\begin{center}
  \textbf{Susovan Pal} \\
  Department of Mathematics and Data Science, Vrije Universiteit Brussel (VUB) \\
  Pleinlaan 2, B-1050 Elsene/Ixelles, Belgium \\
  \texttt{susovan.pal@vub.be, susovan97@gmail.com}
  
\end{center}
\begin{abstract}
 In this short note, we show that the continuous intrinsic graph Laplace operator with Gaussian kernel on a compact Riemannian manifold without boundary uniquely determines both the Riemannian metric and the sampling density, provided the latter is positive. In contrast, the corresponding continuous extrinsic graph Laplace operator uniquely determines the sampling measure; moreover, when the operator is defined via an embedding into Euclidean space, it also uniquely determines the induced Riemannian metric and the sampling density.
\end{abstract}

\nd \keywords{\textbf{Keywords:} Graph Laplacians; manifold learning; identifiability.}\\
\smallskip
\nd \textbf{Word count:} 4500 (approx.)

\section{Introduction}

\nd Let $(M,g)$ be a smooth compact $d$-dimensional Riemannian manifold without boundary, and let
$\mu=\mathrm{dvol}_g$ denote the volume form. Let $p$ be a continuous probability density on $M$ with respect to
$\mu$, and let $(X_i)_{i=1}^n$ be i.i.d.\ $M$--valued samples drawn from $p$. Given a bandwidth $t>0$,
we define the (unnormalized) \textit{discrete graph Laplace operator} by
\begin{equation}\label{eq:discrete_GL}
(L_{n,t} f)(x)
:= \frac{1}{n\,t^{d/2+1}} \sum_{j=1}^n
e^{-\frac{\ell(x,X_j)^2}{t}}\big(f(x)-f(X_j)\big), f \in C(M)
\end{equation}
where $\ell$ is a distance function on $M$ specified below.

\nd As $n\to\infty$, the operator $L_{n,t}$ converges almost surely, pointwise in $x$, to the
corresponding \textit{continuous} integral operator
\begin{equation}\label{eq:continuous_GL}
L_t f(x)
= \frac{1}{t^{d/2+1}} \int_M
  e^{-\frac{\ell(x,y)^2}{t}}\big(f(x)-f(y)\big) p(y)\,d\mu(y),  f \in C(M)
\end{equation}
which we call the \textit{unnormalized continuous graph Laplace operator} associated to $(M,g,p, \ell)$.

\nd We call $L_t$ the continuous \textit{intrinsic} graph Laplace operator if $\ell:=d_g$, the intrinsic
distance induced by $g$. In this case, we denote the operator by $L^{(g)}_{t,p}$. For practical
purposes, $d_g$ may be difficult to estimate, so in the manifold learning community one often works
with the \textit{extrinsic} continuous graph Laplace operator, where
$\ell(x,y):=\norm{\iota(x)-\iota(y)}_{\Ddim}$ is the ambient/extrinsic distance for a smooth embedding
$\iota:M\to {\Ddim}$. We denote this operator by $L^{(ext)}_{t,p}(g;\iota).$ Of special importance is the case where $g$ is induced by $\iota.$\\

\nd Both the discrete and continuous operators are extensively used for manifold learning, see e.g. \cite{BelkinNiyogiJCSS2008}, \cite{BelkinNiyogiNIPS2006}. In the boundaryless case, the pointwise and spectral convergences of the discrete and continuous operators to the weighted Laplace-Beltrami operator for $t\downarrow 0$ have been exclusively studied in above as well as \cite{HeinAudibertLuxburgJMLR2007}, \cite{HeinAudibertLuxburgNIPS2005}. More recently, these convergence questions for manifolds with smooth and non-smooth boundaries and with isolated singularities have been addressed in \citet{Belkin2012Boundary}, \cite{SingerWu2012}, \cite{SingerWu2017}, \cite{PalTewodrose2025}, \cite{Pal2025_AsymptoticsGraphLaplace_IsolatedSingularity}.\\

\nd While the limit $t\downarrow 0$ explains how graph Laplacians approximate \textit{intrinsic }differential operators, a distinct and more
identifiability-oriented question is the inverse one:
\emph{to what extent does either a single member or the entire family of graph Laplace operators (as $t$ is kept fixed or varies) determine the underlying
geometric and probabilistic structure?}
In this note we address this question for the \textit{continuous} operators on compact manifolds without boundary.
Under mild smoothness assumptions, we establish identifiability results for (i) the Riemannian metric $g$ (Theorem \ref{thm:intrinsic_metric_identifiable}) and
(ii) the sampling density $p$ (Theorem \ref{thm:intrinsic_density_identifiable}), and their joint identification (Theorem \ref{thm:intrinsic_joint_identifiable}) from the knowledge of the operator family in the \textit{intrinsic} setting (cf. Section \ref{scn:intrinsic-GL}), and we delineate
what can and cannot be recovered in the\textit{ extrinsic} setting (cf. Section \ref{scn:extrinsic-GL}), Theorems \ref{thm:extrinsic_identifies_measure}, \ref{thm:extrinsic_induced_metric_identifiable}, \ref{thm:extrinsic_joint_identifies_metric_and_density}.
To the best of our knowledge, such identifiability statements are not stated explicitly in the existing graph Laplacian literature, even though their proofs rely on elementary consequences of the integral representations.\\

\nd Although only discrete graph Laplace operators are available in applications, classical consistency results mentioned before show that they converge to the corresponding continuous operators as the sample size grows. Our identifiability results therefore clarify that there is a \textit{unique Riemannian metric} \textit{and}  \textit{a unique density} to be recovered in this limit.\\

\noindent Similar to discrete \textit{graph} Laplace operators, discrete Laplace-Beltrami operators have long been
a central object in \textit{discrete differential geometry} and \textit{geometry processing},
where they are defined on weighted graphs, triangulated manifolds, meshes, and simplicial complexes,
and serve as discrete \emph{deterministic} analogues of the Laplace--Beltrami operator
\cite{BobenkoSuris2008}. In contrast, discrete \textit{graph} Laplace operators arising in manifold
learning serve as \emph{random, sample-dependent} analogues of \textit{weighted}
Laplace--Beltrami operators. Beyond approximation and convergence questions, discrete Laplace-Beltrami operators have also been studied
from an inverse or identifiability perspective. In particular, \cite{GuGuoLuoZeng2010} showed that
the cotangent (discrete Laplace--Beltrami) operator on a triangulated \textit{surface} uniquely
determines the underlying discrete Riemannian metric (up to a global scaling), via a variational
formulation. A natural conjecture extending this result to arbitrary-dimensional polyhedral
manifolds remains open.

These results suggest that it is natural to ask analogous identifiability questions for
kernel-based graph Laplace operators arising in manifold learning, which is the focus of the
present work. Finally, we emphasize that identifiability results for graph Laplace operators do not automatically translate to the same for discrete Laplace-Beltrami operators.


\section{Identifiability of intrinsic continuous graph Laplacians}\label{scn:intrinsic-GL}

\begin{lemma}[\textbf{A $C^2$ Riemannian metric is determined by its distance}]
\label{lem:distance_determines_metric}
Let $M$ be a smooth manifold and let $g_1,g_2$ be $C^2$ Riemannian metrics on $M$ with
geodesic distance functions $d_{g_1},d_{g_2}$. If $d_{g_1}(x,y)=d_{g_2}(x,y)$ for all
$x,y\in M$, then $g_1=g_2$.
\end{lemma}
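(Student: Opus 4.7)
The plan is to reduce the lemma to showing that the two metrics induce the same norm on each tangent space, and then to express that norm purely in terms of the distance function. Since a Riemannian metric is a fiberwise symmetric bilinear form, the polarization identity $g(v,w)=\tfrac12\bigl(|v+w|_g^2-|v|_g^2-|w|_g^2\bigr)$ implies that $g$ is determined by the pointwise norm $v\mapsto |v|_g$ on $TM$. Hence it suffices to prove that $|v|_{g_1}=|v|_{g_2}$ for every $x\in M$ and every $v\in T_xM$.

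The key infinitesimal formula I aim to establish is
\[
|v|_g \;=\; \lim_{t\downarrow 0}\frac{d_g(x,c(t))}{t}
\]
for any $C^1$ curve $c:(-\varepsilon,\varepsilon)\to M$ with $c(0)=x$, $c'(0)=v$ and either metric $g\in\{g_1,g_2\}$. Because such a curve can be chosen intrinsically (e.g.\ as a coordinate line through $x$) independently of $g_1$ and $g_2$, the hypothesis $d_{g_1}=d_{g_2}$ immediately yields $|v|_{g_1}=|v|_{g_2}$, and the polarization step above completes the proof.

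To prove the formula, the upper bound is direct: by continuity of $s\mapsto |c'(s)|_g$, one has
\[
d_g(x,c(t))\;\le\;\int_0^t |c'(s)|_g\,ds\;=\;t|v|_g+o(t).
\]
For the lower bound I would compare $c$ with the $g$-radial geodesic $\gamma_v(s):=\exp_x^g(sv)$, which by Gauss's lemma satisfies $d_g(x,\gamma_v(t))=t|v|_g$ for $t$ less than the injectivity radius at $x$. In a fixed coordinate chart around $x$, both $c(t)$ and $\gamma_v(t)$ have the form $x+tv+O(t^2)$ (using $C^1$ regularity of $c$ and of the exponential map of a $C^2$ metric), so $c(t)-\gamma_v(t)=o(t)$ in coordinates; combined with the equivalence of $d_g$ and Euclidean chart distance on a precompact neighborhood of $x$, this yields $d_g(c(t),\gamma_v(t))=o(t)$. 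The triangle inequality then gives $d_g(x,c(t))\ge t|v|_g-o(t)$, matching the upper bound in the limit.

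The main obstacle, or at least the only nontrivial regularity check, is verifying that Gauss's lemma and the equivalence of $d_g$ with Euclidean chart distance are valid under only $C^2$ regularity of $g$. Both facts are classical in this setting: the geodesic ODE then has $C^1$ coefficients, so the exponential map is a $C^1$ local diffeomorphism near the zero section and the normal-coordinate proof of Gauss's lemma applies; and the two-sided bound $C^{-1}|y-z|_{\mathrm{eucl}}\le d_g(y,z)\le C|y-z|_{\mathrm{eucl}}$ on a small coordinate ball follows from continuity and positive-definiteness of $g_{ij}$. With these ingredients in place, the pointwise identification of the metrics, and hence the lemma, follows.
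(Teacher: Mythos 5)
Your proof is correct, but it follows a genuinely different route from the paper. The paper recovers the metric tensor directly from the \emph{second-order} behaviour of the squared distance: it uses the classical identity $(g_i)_{jk}(x)=-\tfrac12\,\partial^2_{u^j v^k}\,d_{g_i}(u,v)^2\big|_{u=v=x}$, granting that $d_{g_i}^2$ is $C^2$ near the diagonal for a $C^2$ metric, so the whole argument is one differentiation once that regularity is accepted. You instead recover only the \emph{first-order} information, the norm $|v|_g=\lim_{t\downarrow 0}d_g(x,c(t))/t$ along a fixed $C^1$ curve chosen independently of the two metrics, and then pass from norms to the bilinear forms by polarization (legitimate here because both norms are a priori induced by inner products). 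Your route is longer but needs less regularity of the distance function: it only requires the first-order expansion of curves, local two-sided comparability of $d_g$ with the chart distance, and local minimality of radial geodesics, all of which are standard for $C^2$ (indeed much rougher) metrics; in fact you could avoid Gauss's lemma altogether by bounding $g\ge(1-\varepsilon)^2 g(x)$ on a small coordinate ball and running the usual exit-the-ball length estimate, which makes the lower bound work for merely continuous metrics. Two small points: $C^1$ regularity of $c$ gives $c(t)=x+tv+o(t)$, not $O(t^2)$ (harmless, since you only use $o(t)$), and the smoothness/minimality facts for the exponential map of a $C^2$ metric that you invoke are indeed classical, so the regularity check you flag is fine.
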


\begin{proof}
Fix $x\in M$ and choose local coordinates $(x^1,\dots,x^d)$ on a neighborhood $U$ of $x$.
For $i=1,2$ define the squared distance function
\[
\sigma_i(u,v):=d_{g_i}(u,v)^2,\qquad (u,v)\in U\times U.
\]
For $C^2$ metrics, $\sigma_i$ is $C^2$ in a neighborhood of the diagonal in $U\times U$
(away from the cut locus; in particular, for $u,v$ sufficiently close to $x$).
In these coordinates, the metric tensor is recovered from $\sigma_i$ by the standard identity
\[
(g_i)_{jk}(x)\;=\;-\frac{1}{2}\,\frac{\partial^2}{\partial u^j\,\partial v^k}\,\sigma_i(u,v)\Big|_{u=v=x}.
\]

If $d_{g_1}\equiv d_{g_2}$, then $\sigma_1\equiv \sigma_2$ on $U\times U$, hence their mixed
second derivatives at $(x,x)$ coincide, so $(g_1)_{jk}(x)=(g_2)_{jk}(x)$ for all $j,k$.
Since $x$ is arbitrary, $g_1=g_2$.
\end{proof}

\begin{theorem}[\textbf{Intrinsic operator identifies the Riemannian metric (for fixed bandwidth and positive density)}]
\label{thm:intrinsic_metric_identifiable}
Let $M$ be a smooth compact manifold without boundary, $\dim M=d$.
Fix $t>0$ and $p\in C(M)$ with $p(x)>0$ for all $x\in M$.
For any $C^2$ Riemannian metric $g$ on $M$, define
\[
(L^{(g)}_{t,p} f)(x)
:=\frac{1}{t^{\frac d2+1}}\int_M \exp\!\Big(-\frac{d_g(x,y)^2}{t}\Big)\,\bigl(f(x)-f(y)\bigr)\,p(y)\,d\mu_g(y),
\qquad f\in C(M).
\]
If $g_1,g_2$ are $C^2$ metrics on $M$ such that
\[
L^{(g_1)}_{t,p}=L^{(g_2)}_{t,p}\quad\text{as operators on }C(M),
\]
then $g_1=g_2$.
\end{theorem}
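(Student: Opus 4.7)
\emph{Plan.} My plan is to decompose $L^{(g)}_{t,p}$ into its multiplication and integral parts, recover each individually from the operator alone, and then exploit a ``diagonal trick'' available because of the Gaussian kernel to read off both the volume form $d\mu_g$ and the geodesic distance $d_g$; Lemma~\ref{lem:distance_determines_metric} then closes the argument. Concretely, write $L^{(g)}_{t,p}=M_g - K_g$, where $M_g$ is multiplication by
\[
A_g(x):=\frac{1}{t^{d/2+1}}\int_M e^{-d_g(x,y)^2/t}\,p(y)\,d\mu_g(y),
\]
and $K_g$ is the integral operator with continuous kernel $t^{-(d/2+1)}\,e^{-d_g(x,y)^2/t}\,p(y)$ integrated against $d\mu_g$.

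\emph{Step 1: Separating the two parts by a peaking family.} For any $x_0\in M$, choose continuous bumps $\phi_n\in C(M)$ (e.g.\ via Urysohn's lemma, so independent of $g$) with $\phi_n(x_0)=1$, $0\le\phi_n\le 1$, and $\mathrm{supp}(\phi_n)$ shrinking to $\{x_0\}$. Since the finite positive Borel measure $e^{-d_g(x_0,\cdot)^2/t}p(\cdot)\,d\mu_g$ is atomless, dominated convergence yields $K_g\phi_n(x_0)\to 0$, so
\[
A_g(x_0)\;=\;\lim_{n\to\infty}\bigl(L^{(g)}_{t,p}\phi_n\bigr)(x_0).
\]
The test functions $\phi_n$ are independent of the metric, hence the hypothesis $L^{(g_1)}_{t,p}=L^{(g_2)}_{t,p}$ forces $A_{g_1}\equiv A_{g_2}$; subtracting the (now equal) multiplication operators leaves $K_{g_1}=K_{g_2}$ on all of $C(M)$.

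\emph{Step 2: Riesz representation plus a diagonal trick.} For each fixed $x$, the identity $K_{g_1} f(x)=K_{g_2} f(x)$ for every $f\in C(M)$ says, via the Riesz representation theorem, that the finite positive Borel measures
\[
d\nu_i^x\;:=\;\frac{1}{t^{d/2+1}}\,e^{-d_{g_i}(x,y)^2/t}\,p(y)\,d\mu_{g_i}(y)
\]
on $M$ coincide. Since $d\mu_{g_1}$ and $d\mu_{g_2}$ are mutually absolutely continuous with continuous positive Radon--Nikodym derivative $J$ (write $d\mu_{g_2}=J\,d\mu_{g_1}$), comparing the densities of $\nu_1^x$ and $\nu_2^x$ relative to $d\mu_{g_1}$ --- these densities are continuous in $y$ and $d\mu_{g_1}$ has full support, so a.e.\ equality becomes everywhere equality --- and cancelling $p(y)>0$ yields
\[
e^{-d_{g_1}(x,y)^2/t}\;=\;e^{-d_{g_2}(x,y)^2/t}\,J(y)\qquad\forall\,x,y\in M.
\]
Setting $y=x$ gives $1=J(x)$, hence $J\equiv 1$ and $d\mu_{g_1}=d\mu_{g_2}$; substituting back gives $d_{g_1}(x,y)=d_{g_2}(x,y)$ for all $x,y$. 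Lemma~\ref{lem:distance_determines_metric} then delivers $g_1=g_2$.

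\emph{Main obstacle.} The only delicate point is the separation in Step~1: one must rigorously disentangle the multiplication and compact integral parts of $L^{(g)}_{t,p}$ on $C(M)$, which is exactly what the peaking-family argument accomplishes. Once $K_{g_1}=K_{g_2}$ is in hand, the whole argument is powered by the structural feature of the Gaussian kernel that $e^{-d_g(x,x)^2/t}=1$ on the diagonal, and this lets the ``$y=x$ trick'' extract $d\mu_g$ and $d_g$ simultaneously from one equation.
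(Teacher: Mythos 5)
Your proof is correct and follows essentially the same route as the paper: decompose the operator into multiplication and integral parts, show the multiplication parts (hence the integral operators) agree, then use Riesz representation, mutual absolute continuity of the volume forms, continuity and full support, and the diagonal evaluation $y=x$ to recover first $\mu_g$ and then $d_g$, closing with Lemma~\ref{lem:distance_determines_metric}. The only (cosmetic) difference is in Step~1: you isolate the multiplier $A_g(x_0)$ via a metric-independent peaking family and dominated convergence, whereas the paper identifies the signed measure $m_{1,x}-m_{2,x}$ with $a(x)\delta_x$ and evaluates it on the singleton $\{x\}$ --- both arguments rest on exactly the same fact, the non-atomicity of the Riemannian volume measure.
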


\begin{proof}
Write $\mu_i:=\mu_{g_i}$ and $K_i(x,y):=\exp\!\big(-d_{g_i}(x,y)^2/t\big)$.
For $i=1,2$ define the integral operators
\[
(T_i f)(x):=\int_M K_i(x,y)\,f(y)\,p(y)\,d\mu_i(y),
\qquad f\in C(M).
\]
Then
\[
(L^{(g_i)}_{t,p} f)(x)=\frac{1}{t^{\frac d2+1}}\Big( (T_i \mathbf 1)(x)\,f(x) - (T_i f)(x)\Big).
\]
Assuming $L^{(g_1)}_{t,p}=L^{(g_2)}_{t,p}$, we obtain for all $f\in C(M)$ and all $x\in M$,
\begin{equation}
\label{eq:mult_identity}
(T_1 f)(x)-(T_2 f)(x)=\big((T_1\mathbf 1)(x)-(T_2\mathbf 1)(x)\big)\,f(x).
\end{equation}
Define $a(x):=(T_1\mathbf 1)(x)-(T_2\mathbf 1)(x)$.

\smallskip
\noindent\textbf{Step 1: $T_1\mathbf 1=T_2\mathbf 1$ and hence $T_1=T_2$.}
Fix $x\in M$ and define the Borel measures
\[
m_{i,x}(B):=\int_B K_i(x,y)\,p(y)\,d\mu_i(y),
\qquad B\subset M\ \text{Borel}.
\]
Then $(T_i f)(x)=\int_M f(y)\,dm_{i,x}(y)$ for all $f\in C(M)$, so \eqref{eq:mult_identity} becomes
\[
\int_M f(y)\,d(m_{1,x}-m_{2,x})(y)=a(x)\,f(x)
\qquad(\forall f\in C(M)).
\]
By the Riesz representation theorem, the functional $f\mapsto a(x)f(x)$ is represented by the
measure $a(x)\delta_x$, where $\delta_x$ denotes the Dirac mass at $x,$ hence
\begin{equation}
\label{eq:dirac_measure_identity}
m_{1,x}-m_{2,x}=a(x)\delta_x
\qquad\text{as finite signed measures.}
\end{equation}
Evaluating \eqref{eq:dirac_measure_identity} on $\{x\}$ yields
\[
(m_{1,x}-m_{2,x})(\{x\})=a(x).
\]
On the other hand,
\[
m_{i,x}(\{x\})=\int_{\{x\}} K_i(x,y)\,p(y)\,d\mu_i(y)=0
\]
since $\mu_i(\{x\})=0$ for the Riemannian volume measure $\mu_i$.
Thus $(m_{1,x}-m_{2,x})(\{x\})=0$, hence $a(x)=0$.
Since $x$ was arbitrary, $a\equiv 0$, so $T_1\mathbf 1=T_2\mathbf 1$ and \eqref{eq:mult_identity}
implies $T_1=T_2$.

\smallskip
\noindent\textbf{Step 2: identification of the kernel and the distance.}
Fix $x\in M$. From $T_1=T_2$ we have, for all $f\in C(M)$,
\[
\int_M f(y)\,K_1(x,y)\,p(y)\,d\mu_1(y)=\int_M f(y)\,K_2(x,y)\,p(y)\,d\mu_2(y).
\]
Since $\mu_1$ and $\mu_2$ are Riemannian volume measures on $M$,  they are \textit{mutually absolutely
continuous}; write $d\mu_2=w\,d\mu_1$ for a continuous $w>0$. Then
\[
\int_M f(y)\,\big(K_1(x,y)-K_2(x,y)\,w(y)\big)\,p(y)\,d\mu_1(y)=0
\qquad(\forall f\in C(M)).
\]
Hence
\[
\big(K_1(x,y)-K_2(x,y)\,w(y)\big)\,p(y)=0
\quad\text{for $\mu_1$-a.e.\ }y.
\]
Using $p>0$, this gives us
\[
K_1(x,y)-K_2(x,y)\,w(y)=0
\quad\text{for $\mu_1$-a.e.\ }y.
\]
The map $y\mapsto K_1(x,y)-K_2(x,y)\,w(y)$ is continuous since $K_i, w$ are so, and $\mu_1$ has full support,
so the equality holds for all $y\in M$. Setting $y=x$ gives $1=w(x)$, hence $w\equiv 1$ and
$\mu_1=\mu_2$. Therefore $K_1(x,y)=K_2(x,y)$ for all $x,y$, i.e.\ $d_{g_1}=d_{g_2}$.
By Lemma~\ref{lem:distance_determines_metric}, $g_1=g_2$.
\end{proof}

\begin{theorem}[\textbf{Intrinsic operator identifies the positive density (for fixed bandwidth and $C^2$ Riemannian metric)}]
\label{thm:intrinsic_density_identifiable}
Let $M$ be compact without boundary and let $g$ be a $C^2$ Riemannian metric on $M$.
Fix $t>0$. For $p_j, j=1, 2\in C(M)$ with $p_j>0$ define $L^{(g)}_{t,p_j}$ by
\[
(L^{(g)}_{t,p_j} f)(x)
:=\frac{1}{t^{\frac d2+1}}\int_M \exp\!\Big(-\frac{d_g(x,y)^2}{t}\Big)\,\bigl(f(x)-f(y)\bigr)\,p_j(y)\,d\mu_g(y).
\]
If $p_1,p_2\in C(M)$ are strictly positive and
\[
L^{(g)}_{t,p_1}=L^{(g)}_{t,p_2}\quad\text{as operators on }C(M),
\]
then $p_1=p_2$.
\end{theorem}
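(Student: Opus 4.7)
The plan is to mirror the architecture of the proof of Theorem~\ref{thm:intrinsic_metric_identifiable}, exploiting the crucial simplification that now the kernel $K(x,y):=\exp(-d_g(x,y)^2/t)$ and the volume measure $\mu_g$ are common to both operators, since $g$ and $t$ are held fixed. The unknown has shifted from the kernel to the weighting density.

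First, I would introduce the integral operators
\[
(T_j f)(x):=\int_M K(x,y)\,f(y)\,p_j(y)\,d\mu_g(y),\qquad j=1,2,
\]
and decompose $L^{(g)}_{t,p_j}f = t^{-(d/2+1)}\bigl((T_j\mathbf{1})\,f - T_j f\bigr)$. The hypothesis $L^{(g)}_{t,p_1}=L^{(g)}_{t,p_2}$ then translates into the pointwise identity $(T_1 f)(x)-(T_2 f)(x)=a(x)f(x)$ for every $f\in C(M)$ and $x\in M$, where $a(x):=(T_1\mathbf{1})(x)-(T_2\mathbf{1})(x)$.

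Second, I would repeat the Riesz/Dirac-mass step verbatim: the finite Borel measures $m_{j,x}(B):=\int_B K(x,y)p_j(y)\,d\mu_g(y)$ satisfy $m_{1,x}-m_{2,x}=a(x)\delta_x$ as signed measures; evaluating on the singleton $\{x\}$ and using $\mu_g(\{x\})=0$ forces $a(x)=0$. Hence $T_1=T_2$, which rewrites as
\[
\int_M f(y)\,K(x,y)\bigl(p_1(y)-p_2(y)\bigr)\,d\mu_g(y)=0\qquad(\forall f\in C(M),\ \forall x\in M).
\]

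Third, since the integrand (stripped of $f$) is continuous in $y$ and $\mu_g$ has full support, the usual duality argument yields $K(x,y)(p_1(y)-p_2(y))=0$ for all $x,y\in M$. The roles are now reversed compared to Theorem~\ref{thm:intrinsic_metric_identifiable}: there, strict positivity of $p$ let us cancel the density and equate the kernels; here, strict positivity of $K$ (which holds everywhere because $\exp$ is positive) lets us cancel the kernel and conclude $p_1\equiv p_2$ on $M$. I do not anticipate a genuine obstacle — the argument is essentially Step~1 of the previous theorem followed by a one-line cancellation, and no appeal to Lemma~\ref{lem:distance_determines_metric} is needed.
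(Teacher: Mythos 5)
Your proposal is correct and follows essentially the same route as the paper: reduce operator equality to equality of the integral operators via the Riesz/Dirac-mass argument of Step~1 of Theorem~\ref{thm:intrinsic_metric_identifiable}, then use full support of $\mu_g$, continuity, and strict positivity of the Gaussian kernel to cancel $K$ and conclude $p_1=p_2$. The only difference is that you spell out Step~1 explicitly where the paper simply cites it.
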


\begin{proof}
Let $\mu:=\mu_g$ and $K(x,y):=\exp(-d_g(x,y)^2/t)$.
As in \textbf{Step 1} of Theorem~\ref{thm:intrinsic_metric_identifiable}, operator equality implies equality of the
integral operators
\[
(T_{p_j}f)(x):=\int_M K(x,y)\,f(y)\,p_j(y)\,d\mu(y)\qquad (j=1,2).
\]
Fix $x$. Then $T_{p_1}=T_{p_2}$ implies 
\[
K(x,y)\,p_1(y)=K(x,y)\,p_2(y)\quad\text{for $\mu$-a.e.\ }y.
\]
Since $Kp_j, j=1, 2$ are continuous in $y$, and $\mu$ has full support on all of $M$, and $K(x,y)>0$, we conclude $p_1(y)=p_2(y)$ for all $y$.
\end{proof}

\begin{theorem}[\textbf{Intrinsic operator identifies the pair $(g,p)$ (for fixed $t$)}]
\label{thm:intrinsic_joint_identifiable}
Let $M$ be compact without boundary. Fix $t>0$.
Let $g_1,g_2$ be $C^2$ Riemannian metrics on $M$, and let $p_1,p_2\in C(M)$ satisfy $p_1>0$ and $p_2>0$.
Define $L^{(g_i)}_{t,p_i}$ as above.
If
\[
L^{(g_1)}_{t,p_1}=L^{(g_2)}_{t,p_2}\quad\text{as operators on }C(M),
\]
then $g_1=g_2$ and $p_1=p_2$.
\end{theorem}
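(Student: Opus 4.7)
The plan is to adapt the two-step strategy from the proof of Theorem \ref{thm:intrinsic_metric_identifiable}, supplemented by a diagonal-evaluation trick that decouples the density from the kernel.

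First I would reduce the operator equality to equality of the associated integral operators. Writing $\mu_i := \mu_{g_i}$, $K_i(x,y) := \exp(-d_{g_i}(x,y)^2/t)$ and
\[
(T_i f)(x):=\int_M K_i(x,y)\,f(y)\,p_i(y)\,d\mu_i(y),
\]
the hypothesis $L^{(g_1)}_{t,p_1}=L^{(g_2)}_{t,p_2}$ gives $(T_1 f)(x)-(T_2 f)(x)=a(x)f(x)$ for all $f\in C(M)$ and $x\in M$, where $a(x):=(T_1\mathbf 1)(x)-(T_2\mathbf 1)(x)$. By the verbatim Riesz-representation / Dirac-mass argument of Step 1 in the proof of Theorem \ref{thm:intrinsic_metric_identifiable}, the fact that $\mu_i(\{x\})=0$ forces $a\equiv 0$, and hence $T_1=T_2$.

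Next I would compare the resulting Borel measures. Since $g_1,g_2$ are $C^2$ Riemannian metrics on $M$, the volume measures $\mu_1$ and $\mu_2$ are mutually absolutely continuous, so there exists a continuous $w>0$ with $d\mu_2 = w\,d\mu_1$. Testing $T_1=T_2$ against all $f\in C(M)$, and invoking continuity together with the full support of $\mu_1$, for each fixed $x$ I would obtain
\[
K_1(x,y)\,p_1(y) \;=\; K_2(x,y)\,p_2(y)\,w(y) \qquad \text{for all } y\in M.
\]
The crucial step is then to evaluate at the diagonal $y=x$: since $K_i(x,x)=1$, this yields $p_1(x)=p_2(x)w(x)$ for every $x\in M$. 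Substituting $p_2 w = p_1$ back into the displayed identity and dividing by $p_1>0$ yields $K_1=K_2$, hence $d_{g_1}=d_{g_2}$. Lemma \ref{lem:distance_determines_metric} then gives $g_1=g_2$, so $\mu_1=\mu_2$ and therefore $w\equiv 1$, and finally $p_1=p_2$.

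The main obstacle I expect is that neither Theorem \ref{thm:intrinsic_metric_identifiable} nor Theorem \ref{thm:intrinsic_density_identifiable} can be invoked directly, because each fixes one of $(g,p)$ while varying the other; in the joint problem the unknowns are coupled in a genuinely bilinear way through the measures $K_i\,p_i\,d\mu_i$. The diagonal evaluation $y=x$ is precisely what breaks this deadlock: it exploits the normalization $K_i(x,x)=1$ of the Gaussian kernel to extract a purely density-theoretic constraint $p_1=p_2 w$, and thereby reduces the remaining problem to an off-diagonal kernel comparison that isolates the distance functions and lets Lemma \ref{lem:distance_determines_metric} finish the job.
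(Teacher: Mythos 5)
Your proposal is correct and follows essentially the same route as the paper's own proof: reduction to $T_1=T_2$ via the Step~1 Riesz/Dirac argument, the Radon--Nikodym factor $w$ with $d\mu_2=w\,d\mu_1$, the pointwise identity $K_1(x,y)p_1(y)=K_2(x,y)p_2(y)w(y)$, diagonal evaluation to get $p_1=p_2w$, cancellation of $p_1>0$ to obtain $K_1=K_2$, and then Lemma~\ref{lem:distance_determines_metric} to conclude $g_1=g_2$, $w\equiv 1$, $p_1=p_2$. No gaps; the diagonal-evaluation step you highlight is exactly the paper's "setting $x=y$" move.
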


\begin{proof}
Let $\mu_i:=\mu_{g_i}$ and $K_i(x,y):=\exp(-d_{g_i}(x,y)^2/t)$.
As in \textbf{Step 1 }of Theorem~\ref{thm:intrinsic_metric_identifiable}, operator equality implies equality of the
integral operators
\[
(T_i f)(x):=\int_M K_i(x,y)\,f(y)\,p_i(y)\,d\mu_i(y),
\]

\nd Write $d\mu_2=w\,d\mu_1$ with $w\in C(M)$ and $w>0$.
Arguing exactly as in Theorem~\ref{thm:intrinsic_metric_identifiable} (Radon--Nikodym + full support on all of $M$),
we obtain the pointwise identity
\[
K_1(x,y)\,p_1(y)=K_2(x,y)\,p_2(y)\,w(y)\qquad(\forall x,y\in M).
\]
Setting $x=y$ gives $p_1(y)=p_2(y)\,w(y)$ for all $y$, hence $w=p_1/p_2$.
Substituting back yields
\[
K_1(x,y)\,p_1(y)=K_2(x,y)\,p_2(y)\,\frac{p_1(y)}{p_2(y)}=K_2(x,y)\,p_1(y),
\]
so (since $p_1>0$) $K_1(x,y)=K_2(x,y)$ for all $x,y$, i.e. $d_{g_1}=d_{g_2}$.
By Lemma~\ref{lem:distance_determines_metric}, $g_1=g_2$, hence $\mu_1=\mu_2$ and $w\equiv 1$.
From $p_1=p_2\,w$ we conclude $p_1=p_2$.
\end{proof}

\section{On identifiability of extrinsic/ambient continuous graph Laplacians}\label{scn:extrinsic-GL}

\nd Let $\iota:M\to\mathbb{R}^D$ be a smooth embedding. Fix $t>0.$
For a $C^2$ Riemannian metric $g$ on $M$ and $p\in C(M)$ with $p>0$, recall the definition of the extrinsic continuous graph Laplacians:
\begin{equation}
\label{eq:extrinsic_L}
(L^{\mathrm{ext}}_{t,p}(g;\iota)f)(x)
:=\frac{1}{t^{\frac d2+1}}\int_M \exp\!\Big(-\frac{\|\iota(x)-\iota(y)\|^2}{t}\Big)\,\bigl(f(x)-f(y)\bigr)\,p(y)\,d\mu_g(y),
\qquad f\in C(M).
\end{equation}

\nd Note that we do not assume \textit{at this point} that the Riemannian metric $g$ is induced by $\iota.$
Set
\[
K_\iota(x,y):=\exp\!\Big(-\frac{\|\iota(x)-\iota(y)\|^2}{t}\Big)
\quad\text{and}\quad
m:=p\,\mu_g.
\]

\begin{theorem}[\textbf{Extrinsic operator identifies the sampling measure $m=p\,\mu_g$}]
\label{thm:extrinsic_identifies_measure}
Fix $t>0$ and a smooth embedding $\iota:M\to\mathbb{R}^D$.
Let $(g_1,p_1)$ and $(g_2,p_2)$ be such that each Riemannian metric $g_i$ is $C^2$ and each density function $p_i\in C(M)$ satisfies $p_i>0$.
Let $m_i:=p_i\,\mu_{g_i}$. Assume
\[
L^{\mathrm{ext}}_{t,p_1}(g_1;\iota)=L^{\mathrm{ext}}_{t,p_2}(g_2;\iota)
\quad\text{as operators on }C(M).
\]
Then $m_1=m_2$.
\end{theorem}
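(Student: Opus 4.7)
The plan is to mirror Step 1 of the proof of Theorem~\ref{thm:intrinsic_metric_identifiable} to first reduce equality of the two extrinsic Laplacians to equality of the underlying positive integral operators, and then to deduce $m_1=m_2$ by a direct Riesz argument that exploits the strict positivity of the Gaussian kernel $K_\iota$.

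First I would introduce, for $i=1,2$, the positive integral operators
\[
(T_i f)(x):=\int_M K_\iota(x,y)\,f(y)\,dm_i(y),\qquad f\in C(M),
\]
so that $(L^{\mathrm{ext}}_{t,p_i}(g_i;\iota) f)(x)=t^{-d/2-1}\bigl((T_i\mathbf 1)(x)\,f(x)-(T_i f)(x)\bigr)$. The hypothesis then yields $(T_1 f-T_2 f)(x)=a(x)\,f(x)$ for every $f\in C(M)$, with $a:=T_1\mathbf 1-T_2\mathbf 1$. Exactly as in Theorem~\ref{thm:intrinsic_metric_identifiable}, I would apply the Riesz representation theorem pointwise in $x$ to the signed Borel measures $dm_{i,x}(y):=K_\iota(x,y)\,dm_i(y)$ to conclude $m_{1,x}-m_{2,x}=a(x)\,\delta_x$, and then evaluate on the singleton $\{x\}$. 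Since each $m_i=p_i\,\mu_{g_i}$ is diffuse (Riemannian volume of a point is $0$), we have $m_i(\{x\})=0$, forcing $a\equiv 0$ and hence $T_1=T_2$.

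The second step is the genuinely new input and should be straightforward. Fix any $x_0\in M$. Since $y\mapsto K_\iota(x_0,y)$ is continuous and strictly positive on the compact manifold $M$, it is bounded below by a positive constant. Therefore, for any $g\in C(M)$, the function $f:=g/K_\iota(x_0,\cdot)$ belongs to $C(M)$, and evaluating $T_1 f(x_0)=T_2 f(x_0)$ gives
\[
\int_M g\,dm_1=\int_M g\,dm_2\qquad(\forall\,g\in C(M)).
\]
By the Riesz representation theorem for finite Borel measures, this yields $m_1=m_2$.

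I do not anticipate a substantive obstacle. The key simplification relative to the intrinsic setting is that the kernel $K_\iota$ is built from the common embedding $\iota$, so no change-of-variable step between $d\mu_{g_1}$ and $d\mu_{g_2}$ is required, and the Dirac-mass computation behind Step 1 is insensitive to which metric supplies the base measure. This same observation also clarifies why the conclusion cannot be strengthened beyond $m_1=m_2$ without extra structural hypotheses: the extrinsic operator only sees the product $p\,d\mu_g$ paired against the fixed kernel $K_\iota$, so $g$ and $p$ are recoverable separately only after further assumptions (such as $g$ being the metric induced by $\iota$), which is precisely the content of the subsequent theorems.
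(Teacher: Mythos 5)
Your proof is correct, and its first half coincides with the paper's: you expand the Laplacian as $t^{-d/2-1}\bigl((T_i\mathbf 1)f-T_i f\bigr)$ and run the same Riesz-plus-Dirac-mass argument, using non-atomicity of $m_i=p_i\,\mu_{g_i}$, to get $a\equiv 0$ and $T_1=T_2$ (equivalently $K_\iota(x,\cdot)\,m_1=K_\iota(x,\cdot)\,m_2$ for each $x$). Where you diverge is the final step. The paper invokes mutual absolute continuity of $\mu_{g_1},\mu_{g_2}$, writes $dm_i=q_i\,d\mu_{g_1}$ with continuous $q_i>0$, and cancels the strictly positive kernel pointwise to get $q_1=q_2$ $\mu_{g_1}$-a.e., hence $m_1=m_2$. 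You instead fix $x_0$, note that $K_\iota(x_0,\cdot)$ is continuous and bounded below by a positive constant on compact $M$, test $T_1f(x_0)=T_2f(x_0)$ with $f:=g/K_\iota(x_0,\cdot)$ for arbitrary $g\in C(M)$, and conclude $\int_M g\,dm_1=\int_M g\,dm_2$ for all $g$, so $m_1=m_2$ by uniqueness in the Riesz representation theorem. Your finish is slightly more elementary: it sidesteps the Radon--Nikodym decomposition and the (mild but unstated in your version unnecessary) regularity discussion of the densities $q_i$, needing only strict positivity of the Gaussian kernel and a single test point $x_0$. Both arguments are sound; yours is a clean simplification of the paper's Step 2, and your closing remark correctly locates why nothing stronger than $m_1=m_2$ can be extracted at this level of generality.
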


\begin{proof}
Define the integral operators
\[
(T_i f)(x):=\int_M K_\iota(x,y)\,f(y)\,dm_i(y),\qquad f\in C(M), \, m_i:=p_i\mu_{g_i}
\]
so that just like in the proof of Theorem \ref{thm:intrinsic_metric_identifiable} expanding \eqref{eq:extrinsic_L} yields
\[
(L^{\mathrm{ext}}_{t,p_i}(g_i;\iota)f)(x)
=\frac{1}{t^{\frac d2+1}}\Big((T_i\mathbf 1)(x)\,f(x)-(T_i f)(x)\Big).
\]
From operator equality we get, for all $f\in C(M)$,
\[
(T_1 f)(x)-(T_2 f)(x)=\big((T_1\mathbf 1)(x)-(T_2\mathbf 1)(x)\big)\,f(x).
\]
\nd Now following \textbf{Step 1}  (the exact same logic (Riesz representation plus Dirac mass argument) of Theorem \ref{thm:intrinsic_metric_identifiable}, we conclude $T_1=T_2,$ which implies by Riesz representation theorem that $K_{\iota}(x,.)m_1= K_{\iota}(x,.)m_2$ as measures. Next, $y\mapsto K_\iota(x,y)$ is continuous and strictly positive on $M$.
Since $\mu_{g_1}$ and $\mu_{g_2}$ are mutually absolutely continuous, we may write
\(
dm_i(y) = q_i(y)\, d\mu_{g_1}(y), q_i \in C(M), \ q_i>0,\ i=1,2.
\)
From the equality
\(
K_\iota(x,\cdot)\, m_1 = K_\iota(x,\cdot)\, m_2
\),
it follows that
\(
K_\iota(x,y)\, q_1(y) = K_\iota(x,y)\, q_2(y)
\) for $\mu_{g_1}$ a.e. $y\in M$. Since $K_\iota(x,y)>0$ for all $y\in M$, we conclude that $q_1(y)=q_2(y)$ for
$\mu_{g_1}$-almost every $y$, and hence $m_1=m_2$. 
\end{proof}

\begin{corollary}[\textbf{Fixed embedding, fixed density $p$: extrinsic operator identifies the volume form $\mu_g$}]
\label{cor:extrinsic_identifies_volume}
Fix $t>0$, a smooth embedding $\iota$, and a continuous $p>0$.
If $g_1,g_2$ are $C^2$ metrics such that
\[
L^{\mathrm{ext}}_{t,p}(g_1;\iota)=L^{\mathrm{ext}}_{t,p}(g_2;\iota)\quad\text{on }C(M),
\]
then $\mu_{g_1}=\mu_{g_2}$.
\end{corollary}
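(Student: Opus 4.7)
The plan is to apply Theorem \ref{thm:extrinsic_identifies_measure} directly, with the densities taken to be $p_1=p_2=p$. Under this specialization, the sampling measures in the theorem become $m_i = p\,\mu_{g_i}$, and the hypothesis $L^{\mathrm{ext}}_{t,p}(g_1;\iota)=L^{\mathrm{ext}}_{t,p}(g_2;\iota)$ is exactly the operator equality assumed in the theorem. Thus the theorem immediately yields
\[
p\,\mu_{g_1} \;=\; p\,\mu_{g_2}
\]
as finite Borel measures on $M$.

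The remaining step is to cancel the common continuous strictly positive factor $p$. I would do this by a one-line test-function argument: for any $\phi\in C(M)$, the function $\phi/p$ is also continuous (since $p>0$ everywhere on the compact manifold $M$, so $1/p$ is continuous and bounded), and applying $p\,\mu_{g_1}=p\,\mu_{g_2}$ to $\phi/p$ gives
\[
\int_M \phi\,d\mu_{g_1} \;=\; \int_M (\phi/p)\,p\,d\mu_{g_1} \;=\; \int_M (\phi/p)\,p\,d\mu_{g_2} \;=\; \int_M \phi\,d\mu_{g_2}.
\]
Since this holds for every $\phi\in C(M)$, the Riesz representation theorem (uniqueness part) forces $\mu_{g_1}=\mu_{g_2}$.

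There is essentially no obstacle here; the corollary is a formal consequence of Theorem \ref{thm:extrinsic_identifies_measure}, once one observes that on compact $M$ a continuous strictly positive $p$ has a bounded continuous reciprocal, which legitimizes the cancellation step. The only mild care needed is to point out that compactness of $M$ together with continuity and strict positivity of $p$ guarantees $\inf_M p >0$, so that multiplication by $p$ is an isomorphism at the level of measures with continuous positive densities.
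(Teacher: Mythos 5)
Your proposal is correct and follows exactly the paper's route: invoke Theorem~\ref{thm:extrinsic_identifies_measure} with $p_1=p_2=p$ to get $p\,\mu_{g_1}=p\,\mu_{g_2}$, then cancel the strictly positive continuous factor $p$. The paper states the cancellation in one line; your test-function argument via $\phi/p$ simply makes that step explicit.
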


\begin{proof}
By Theorem~\ref{thm:extrinsic_identifies_measure}, we have $p\,\mu_{g_1}=p\,\mu_{g_2}$.
Since $p>0$ is continuous on $M$, therefore 
$\mu_{g_1}=\mu_{g_2}$.
\end{proof}

\begin{corollary}[\textbf{Fixed embedding, fixed metric: extrinsic operator identifies density}]
\label{cor:extrinsic_identifies_density}
Fix $t>0$, a smooth embedding $\iota$, and a $C^2$ metric $g$.
If $p_1,p_2\in C(M)$ satisfy $p_1>0$, $p_2>0$, and
\[
L^{\mathrm{ext}}_{t,p_1}(g;\iota)=L^{\mathrm{ext}}_{t,p_2}(g;\iota)\quad\text{on }C(M),
\]
then $p_1=p_2$.
\end{corollary}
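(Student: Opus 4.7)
The plan is to deduce this corollary directly from Theorem~\ref{thm:extrinsic_identifies_measure}, since the hypotheses here are a strict specialization: the Riemannian metric is fixed at a common $g$ and only the density is allowed to vary. In particular, the sampling measures in the two operators are $m_1 = p_1\,\mu_g$ and $m_2 = p_2\,\mu_g$, built from the \emph{same} volume form $\mu_g$.

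First I would invoke Theorem~\ref{thm:extrinsic_identifies_measure} applied to the pairs $(g, p_1)$ and $(g, p_2)$: the hypothesis that $L^{\mathrm{ext}}_{t,p_1}(g;\iota) = L^{\mathrm{ext}}_{t,p_2}(g;\iota)$ on $C(M)$ yields $m_1 = m_2$, that is, $p_1\,\mu_g = p_2\,\mu_g$ as finite Borel measures on $M$. Next, I would pass from measure equality to pointwise equality of the continuous densities. Testing both sides against an arbitrary $f \in C(M)$ gives $\int_M f(y)\,(p_1(y) - p_2(y))\,d\mu_g(y) = 0$, so by the Riesz representation theorem $(p_1 - p_2)\,\mu_g$ is the zero measure. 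Since $\mu_g$ has full support on $M$ and $p_1 - p_2$ is continuous, this forces $p_1(y) = p_2(y)$ for every $y \in M$.

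This reduction is essentially immediate, so there is no serious obstacle; the substantive content has already been absorbed into Theorem~\ref{thm:extrinsic_identifies_measure}. The only point that requires a brief justification is the passage from $p_1\,\mu_g = p_2\,\mu_g$ to $p_1 \equiv p_2$, and this is handled by the standard continuity plus full-support argument (identical in spirit to the kernel identification step at the end of the proof of Theorem~\ref{thm:intrinsic_density_identifiable}).
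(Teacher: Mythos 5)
Your proposal is correct and matches the paper's own proof: both invoke Theorem~\ref{thm:extrinsic_identifies_measure} to get $p_1\,\mu_g=p_2\,\mu_g$ and then use continuity of $p_1-p_2$ together with the full support of $\mu_g$ to conclude $p_1=p_2$. No differences worth noting.
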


\begin{proof}
By Theorem~\ref{thm:extrinsic_identifies_measure}, $p_1\,\mu_g=p_2\,\mu_g$.
Since $\mu_g$ has full support, the continuous function $p_1-p_2$ must vanish everywhere, hence $p_1=p_2$.
\end{proof}

\begin{remark}[\textbf{Knowing volume form $\mu_g$ does not determine metric (fixed embedding)}]
\label{rem:extrinsic_metric_not_identifiable}
Corollary~\ref{cor:extrinsic_identifies_volume} shows that for fixed $(t,\iota,p)$ the operator
$L^{\mathrm{ext}}_{t,p}(g;\iota)$ determines $\mu_g$. In general, $\mu_g$ does not determine $g$,
so the metric is not identifiable from $L^{\mathrm{ext}}_{t,p}(g;\iota)$.
\end{remark}

\begin{example}[\textbf{Explicit counterexample on the torus $\mathbb{T}^2$}]
\label{ex:torus_volume_same_metric_diff}
Let $M=\mathbb{T}^2=\mathbb{R}^2/(2\pi\mathbb{Z})^2$ with coordinates $(x,y)$.
Fix $\iota:\mathbb{T}^2\to\mathbb{R}^4$ by
\[
\iota(x,y)=(\cos x,\sin x,\cos y,\sin y).
\]
Fix any $t>0$ and any $p\in C(\mathbb{T}^2)$ with $p>0$.
Let
\[
g_1:=dx^2+dy^2,
\qquad
g_2:=a^2\,dx^2+a^{-2}\,dy^2\quad\text{for some constant }a\neq 1.
\]
Then $\det(g_1)=1$ and $\det(g_2)=a^2\cdot a^{-2}=1$, hence $\mu_{g_1}=\mu_{g_2}$ and therefore
$L^{\mathrm{ext}}_{t,p}(g_1;\iota)=L^{\mathrm{ext}}_{t,p}(g_2;\iota)$, while $g_1\neq g_2$.
\end{example}

\subsection{Identifiability in the case of embedding-induced Riemannian metric}
\nd Next we assume that the Riemannian metric is induced by the embedding, the extrinsic graph Laplace operator also uniquely identifies the induced Riemannian metric. 
\begin{theorem}[\textbf{Extrinsic operator identifies the embedding-induced Riemannian metric for fixed bandwidth}]
\label{thm:extrinsic_induced_metric_identifiable}
Let $M$ be a compact smooth manifold without boundary, $\dim M=d$.
Fix $t>0$ and a continuous density $p(x)>0$ for all $x\in M$.
Let $\iota_1,\iota_2:M\to {\Ddim}$ be $C^2$ embeddings and denote the induced metrics
\[
g_i:=\iota_i^*\langle\cdot,\cdot\rangle_{{\Ddim}},
\qquad \mu_i:=\mu_{g_i},
\qquad 
K_i(x,y):=\exp\!\Big(-\frac{\|\iota_i(x)-\iota_i(y)\|^2}{t}\Big),
\quad i=1,2.
\]
Define the corresponding extrinsic continuous graph Laplace operators by
\[
(L^{\mathrm{ext}}_{t,p}(g_i;\iota_i)f)(x)
:=\frac{1}{t^{\frac d2+1}}
\int_M K_i(x,y)\,\bigl(f(x)-f(y)\bigr)\,p(y)\,d\mu_i(y),
\qquad f\in C(M).
\]
If
\[
L^{\mathrm{ext}}_{t,p}(g_1;\iota_1)=L^{\mathrm{ext}}_{t,p}(g_2;\iota_2)
\quad\text{as operators on }C(M),
\]
then
\[
\|\iota_1(x)-\iota_1(y)\|
=\|\iota_2(x)-\iota_2(y)\|
\quad\text{for all }x,y\in M,
\]
and consequently $g_1=g_2$.
\end{theorem}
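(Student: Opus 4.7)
The plan is to reduce the operator equality, in two steps, to pointwise equality of the Gaussian kernels $K_1(x,y) = K_2(x,y)$, and then recover the induced metric from the $2$-jet of the squared ambient distance along the diagonal.

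First, since the density $p$ is the same on both sides, Corollary~\ref{cor:extrinsic_identifies_volume} immediately yields $\mu_{g_1} = \mu_{g_2} =: \mu$. (Equivalently, one could apply Theorem~\ref{thm:extrinsic_identifies_measure} to the pairs $(g_1,p)$, $(g_2,p)$ and cancel the strictly positive $p$.) Next, writing
\[
(L^{\mathrm{ext}}_{t,p}(g_i;\iota_i) f)(x) = \frac{1}{t^{d/2+1}}\bigl((T_i \mathbf{1})(x)\, f(x) - (T_i f)(x)\bigr),
\qquad (T_i f)(x) := \int_M K_i(x,y)\, f(y)\, p(y)\, d\mu(y),
\]
I would repeat \textbf{Step 1} of the proof of Theorem~\ref{thm:intrinsic_metric_identifiable} (Riesz representation plus Dirac-mass argument at $y = x$) to conclude that $T_1 = T_2$. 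Since both $T_i$ now integrate against the \emph{same} measure $\mu$, the Riesz representation theorem gives, for each fixed $x$, the pointwise identity $\bigl(K_1(x,y) - K_2(x,y)\bigr) p(y) = 0$ for $\mu$-a.e.\ $y$; the continuity of $K_1, K_2, p$, the positivity $p > 0$, and the full support of $\mu$ upgrade this to $K_1(x,y) = K_2(x,y)$ for all $x,y \in M$. Applying $-t\log(\cdot)$ yields the pointwise equality of ambient distances $\|\iota_1(x) - \iota_1(y)\| = \|\iota_2(x) - \iota_2(y)\|$ on $M \times M$.

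To finish, I would recover $g_1 = g_2$ from this equality of squared chord-length functions. It is tempting to invoke Lemma~\ref{lem:distance_determines_metric}, but that lemma concerns the intrinsic \emph{geodesic} distance of a Riemannian metric, whereas here we only have equality of \emph{ambient Euclidean} distances between images under the $\iota_i$; the two coincide only infinitesimally. The correct replacement is the direct identity: in local coordinates $(x^1,\dots,x^d)$ around an arbitrary point $x_0 \in M$, setting $\sigma_i(u,v) := \|\iota_i(u) - \iota_i(v)\|^2$, a short computation gives
\[
-\frac{1}{2}\,\frac{\partial^2 \sigma_i}{\partial u^j\,\partial v^k}\bigg|_{u=v=x_0}
= \langle \partial_j \iota_i(x_0),\, \partial_k \iota_i(x_0)\rangle
= (g_i)_{jk}(x_0),
\]
the last equality being precisely the definition of the induced metric $g_i = \iota_i^*\langle\cdot,\cdot\rangle$. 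Since $\sigma_1 \equiv \sigma_2$ on a neighborhood of the diagonal, their mixed second derivatives match, so $(g_1)_{jk}(x_0) = (g_2)_{jk}(x_0)$ for every $j,k$ and every $x_0$, whence $g_1 = g_2$.

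The main obstacle is this last step: one must resist the reflex to apply Lemma~\ref{lem:distance_determines_metric}, because the Gaussian kernel provides the chord-length function, not the geodesic distance. The key point is that when the Riemannian metric is the one \emph{induced} by the embedding, the chord-length function agrees with the geodesic distance to second order at the diagonal, and the mixed-partial identity above is exactly what encodes this fact. Without the hypothesis that $g_i$ is induced by $\iota_i$, the last step fails — consistent with the counterexample in Example~\ref{ex:torus_volume_same_metric_diff} in the non-induced setting.
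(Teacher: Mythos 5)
Your closing step is fine: recovering $(g_i)_{jk}(x_0)$ from the mixed second derivatives of the chord-length function $\sigma_i(u,v)=\|\iota_i(u)-\iota_i(v)\|^2$ at the diagonal is a correct alternative to the paper's Step~3, which instead Taylor-expands $\iota_i\circ\gamma$ along a curve and reads off $\|d\iota_i(x)[v]\|^2$; your remark that Lemma~\ref{lem:distance_determines_metric} is not the right tool here is also well taken. The genuine gap is at the very first step: you assert $\mu_{g_1}=\mu_{g_2}$ by citing Corollary~\ref{cor:extrinsic_identifies_volume} (or Theorem~\ref{thm:extrinsic_identifies_measure}), but both of those results are stated and proved for a \emph{single fixed embedding} $\iota$, i.e.\ for two operators built from the \emph{same} kernel $K_\iota$; their proofs cancel the common strictly positive factor $K_\iota(x,\cdot)$ from the measure identity $K_\iota(x,\cdot)\,m_1=K_\iota(x,\cdot)\,m_2$. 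In the present theorem the embeddings $\iota_1,\iota_2$ are different, so the kernels $K_1,K_2$ differ, the hypotheses of those results are not satisfied, and what operator equality actually gives (after the Riesz/Dirac Step~1) is $K_1(x,\cdot)\,p\,\mu_1=K_2(x,\cdot)\,p\,\mu_2$ as measures, from which $\mu_1=\mu_2$ does not follow by cancellation. Since your derivation of $K_1\equiv K_2$ explicitly uses that ``both $T_i$ integrate against the same measure $\mu$'', the argument as written assumes at the outset something that still has to be proved.

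The repair is short and is exactly the paper's Step~2: after $T_1=T_2$, use mutual absolute continuity of the Riemannian volume measures to write $d\mu_2=w\,d\mu_1$ with $w\in C(M)$, $w>0$; testing against all $f\in C(M)$ and using $p>0$, continuity, and full support of $\mu_1$ gives the pointwise identity $K_1(x,y)=K_2(x,y)\,w(y)$ for all $x,y$. Setting $y=x$ and using $K_i(x,x)=1$ forces $w\equiv 1$, which yields $\mu_1=\mu_2$ and $K_1\equiv K_2$ \emph{simultaneously}; equality of the ambient distances follows by taking $-t\log(\cdot)$, and your mixed-partials computation then completes the identification $g_1=g_2$.
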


\begin{proof}
For $i=1,2$ define the integral operators
\[
(T_i f)(x):=\int_M K_i(x,y)\,f(y)\,p(y)\,d\mu_i(y),
\qquad f\in C(M).
\]
Then
\[
(L^{\mathrm{ext}}_{t,p}(g_i;\iota_i)f)(x)
=\frac{1}{t^{\frac d2+1}}
\Big((T_i\mathbf 1)(x)\,f(x)-(T_i f)(x)\Big).
\]
Hence the assumption
$L^{\mathrm{ext}}_{t,p}(g_1;\iota_1)=L^{\mathrm{ext}}_{t,p}(g_2;\iota_2)$
implies that for all $f\in C(M)$ and all $x\in M$,
\begin{equation}
\label{eq:ext_mult_identity_induced_rewrite}
(T_1 f)(x)-(T_2 f)(x)
=\big((T_1\mathbf 1)(x)-(T_2\mathbf 1)(x)\big)\,f(x).
\end{equation}

\smallskip
\noindent\textbf{Step 1: reduction to equality of the integral operators.}
The argument showing from \eqref{eq:ext_mult_identity_induced_rewrite} that
\[
T_1\mathbf 1=T_2\mathbf 1
\quad\text{and hence}\quad
T_1=T_2
\]
is \emph{identical} to \textbf{Step~1} in the proof of
Theorem~\ref{thm:intrinsic_metric_identifiable}
(the Riesz representation / Dirac mass argument using non-atomicity of the
Riemannian volume measures), and is therefore omitted.

\smallskip
\noindent\textbf{Step 2: identification of the distance in the ambient space.}
Fix $x\in M$. Since $T_1=T_2$, for all $f\in C(M)$,
\[
\int_M K_1(x,y) f(y)\,p(y)\,d\mu_1(y)
=
\int_M K_2(x,y) f(y)\,p(y)\,d\mu_2(y).
\]
Because $g_1$ and $g_2$ are smooth Riemannian metrics on compact $M$,
their volume measures are mutually absolutely continuous; write
$d\mu_2=w\,d\mu_1$ with a continuous function $w>0$.
Then
\[
\int_M f(y)\,\bigl(K_1(x,y)-K_2(x,y)\,w(y)\bigr)\,p(y)\,d\mu_1(y)=0
\qquad(\forall f\in C(M)).
\]
Since $p>0$ and $\mu_1$ has full support, this implies, using continuity of $K_i, w, i=1,2$
\[
K_1(x,y)=K_2(x,y)\,w(y)\qquad\text{for all }y\in M.
\]
Setting $y=x$ yields $1=1\cdot w(x)$, hence $w(x)=1$ for all $x$ and thus
$w\equiv 1$. Consequently $K_1\equiv K_2$ on $M\times M$, and this gives 
\(
\|\iota_1(x)-\iota_1(y)\|^2
=\|\iota_2(x)-\iota_2(y)\|^2
\text{ for all }x,y\in M.
\)

\smallskip
\noindent\textbf{Step 3: identification of the induced metric.}
Fix $x\in M$ and $v\in T_xM$. Let $\gamma:(-\varepsilon,\varepsilon)\to M$
be a $C^2$ curve with $\gamma(0)=x$ and $\dot\gamma(0)=v$.
For $i=1,2$, Taylor expansion yields
\[
\iota_i(\gamma(s))
=\iota_i(x)+s\,d\iota_i(x)[v]+o(s)\qquad(s\to 0),
\]
and therefore
\[
\|\iota_i(\gamma(s))-\iota_i(x)\|^2
=s^2\,\|d\iota_i(x)[v]\|^2+o(s^2)\qquad(s\to 0).
\]
By Step~2 the left-hand sides coincide for $i=1,2$ for all sufficiently small
$s$, hence
\[
\|d\iota_1(x)[v]\|^2=\|d\iota_2(x)[v]\|^2.
\]
Since $g_i(x)(v,v)=\|d\iota_i(x)[v]\|^2$, this holds for all $x$ and $v$,
and we conclude that $g_1=g_2$.
\end{proof}

\begin{theorem}[\textbf{Fixed embedding: extrinsic operator identifies the density}]
\label{thm:extrinsic_fixed_embedding_identifies_density}
In our setup, fix $t>0$, a $C^2$ embedding $\iota:M\to\mathbb{R}^{D}$ and induced metric $g$.
Let $p_1,p_2\in C(M)$ satisfy $p_i>0$ on $M$ and define $m_i:=p_i\,\mu_g$.
Assume
\(
L^{\mathrm{ext}}_{t,p_1}(g;\iota)=L^{\mathrm{ext}}_{t,p_2}(g;\iota)
\text{ as operators on }C(M)
\). Then $p_1=p_2$ on $M$.
\end{theorem}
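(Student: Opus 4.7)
The plan is to reduce this statement directly to Theorem~\ref{thm:extrinsic_identifies_measure}, since the hypothesis here is the special case of that theorem in which the two metric-embedding pairs coincide. Concretely, I would set $(g_1,p_1)=(g,p_1)$ and $(g_2,p_2)=(g,p_2)$ and apply Theorem~\ref{thm:extrinsic_identifies_measure} with the common embedding $\iota$ and the common $C^2$ induced metric $g$. The assumption $L^{\mathrm{ext}}_{t,p_1}(g;\iota)=L^{\mathrm{ext}}_{t,p_2}(g;\iota)$ is exactly what is needed, and the conclusion of that theorem gives $m_1=m_2$ as finite Borel measures on $M$, that is, $p_1\,\mu_g=p_2\,\mu_g$.

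From the measure identity $p_1\,\mu_g=p_2\,\mu_g$ I then want to pass to pointwise equality. Writing $h:=p_1-p_2\in C(M)$, this means $\int_B h\,d\mu_g=0$ for every Borel set $B\subset M$. Because $\mu_g$ is the Riemannian volume measure of a $C^2$ metric on a compact manifold without boundary, it has full support on $M$, i.e. assigns positive mass to every nonempty open set. Suppose for contradiction that $h(x_0)\neq 0$ for some $x_0\in M$; by continuity there is an open neighborhood $U$ of $x_0$ on which $h$ has constant sign and magnitude bounded below by some $\varepsilon>0$, giving $\bigl|\int_U h\,d\mu_g\bigr|\ge \varepsilon\,\mu_g(U)>0$, contradicting $\int_U h\,d\mu_g=0$. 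Hence $h\equiv 0$ and $p_1=p_2$ on $M$.

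There is no real obstacle here: all of the nontrivial content (the Riesz/Dirac-mass reduction from operator equality to equality of weighted kernel measures, and the Radon--Nikodym step yielding $m_1=m_2$) is already packaged in Theorem~\ref{thm:extrinsic_identifies_measure}. The assumption that $g$ is the metric induced by $\iota$ plays no role in the argument, so the proof is essentially the same as Corollary~\ref{cor:extrinsic_identifies_density}; it is included separately only to emphasize that density identifiability persists in the geometrically natural case where the metric arises from the embedding used to define the extrinsic kernel.
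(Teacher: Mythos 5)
Your proof is correct and matches the paper's intent: the paper's proof is simply ``Immediate'' because the statement is exactly Corollary~\ref{cor:extrinsic_identifies_density} (via Theorem~\ref{thm:extrinsic_identifies_measure}) specialized to the embedding-induced metric, which is precisely the reduction you carry out. Your added full-support argument passing from $p_1\,\mu_g=p_2\,\mu_g$ to $p_1=p_2$ is the same step the paper uses in Corollary~\ref{cor:extrinsic_identifies_density}, so there is no substantive difference.
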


\begin{proof}
Immediate.
\end{proof}

\begin{theorem}[\textbf{Extrinsic operator jointly identifies the sampling density and the embedding-induced metric}]
\label{thm:extrinsic_joint_identifies_metric_and_density}
In our setup, fix $t>0$. For $i=1,2$, let $\iota_i:M\to\mathbb{R}^{D}$ be $C^2$ embeddings and denote the induced metrics
\(
g_i:=\iota_i^*\langle\cdot,\cdot\rangle_{\mathbb{R}^{D}},
 \mu_i:=\mu_{g_i},
 K_i(x,y):=\exp\!\Big(-\frac{\|\iota_i(x)-\iota_i(y)\|^2}{t}\Big).
\)
Let $p_1,p_2\in C(M)$ satisfy $p_i>0$ on $M$, and define $m_i:=p_i\,\mu_i$.
Assume
\(
L^{\mathrm{ext}}_{t,p_1}(g_1;\iota_1)=L^{\mathrm{ext}}_{t,p_2}(g_2;\iota_2)
\text{ as operators on }C(M),
\)
where
\(
L^{\mathrm{ext}}_{t,p_i}(g_i;\iota_i)(f)(x)
:=\frac{1}{t^{\frac d2+1}}\int_M K_i(x,y)\,\bigl(f(x)-f(y)\bigr)\,p_i(y)\,d\mu_i(y).
\)
Then
\(
\|\iota_1(x)-\iota_1(y)\|=\|\iota_2(x)-\iota_2(y)\|
\quad\text{for all }x,y\in M,
\)
and consequently $g_1=g_2$, $\mu_1=\mu_2$, and $p_1=p_2$ on $M$.
\end{theorem}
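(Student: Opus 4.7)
The plan is to combine the two techniques already developed in the paper: the Riesz/Dirac reduction used in Theorem~\ref{thm:intrinsic_metric_identifiable} (and reused in Theorem~\ref{thm:extrinsic_identifies_measure}) with the Taylor--expansion argument of Theorem~\ref{thm:extrinsic_induced_metric_identifiable}. The only genuinely new ingredient is that I must handle simultaneously the difference of two kernels \emph{and} the difference of two reference measures; fortunately the Gaussian kernels satisfy $K_i(x,x)=1$, which will let me pin down the Radon--Nikodym density on the diagonal.

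First, I would introduce the integral operators
\[
(T_i f)(x):=\int_M K_i(x,y)\,f(y)\,dm_i(y),\qquad m_i:=p_i\,\mu_i,\qquad i=1,2,
\]
so that $L^{\mathrm{ext}}_{t,p_i}(g_i;\iota_i)f = t^{-(d/2+1)}\bigl((T_i\mathbf 1)\,f - T_i f\bigr)$. The hypothesis then gives $(T_1 f)(x) - (T_2 f)(x) = ((T_1\mathbf 1)(x) - (T_2\mathbf 1)(x))\,f(x)$ for every $f\in C(M)$ and every $x\in M$. I would then invoke verbatim \textbf{Step~1} of Theorem~\ref{thm:intrinsic_metric_identifiable}: by Riesz representation the signed measure $K_1(x,\cdot)\,m_1 - K_2(x,\cdot)\,m_2$ equals a multiple of $\delta_x$; since both $m_i$ are non-atomic Riemannian volume-weighted measures, this multiple must vanish. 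Consequently $T_1=T_2$, i.e.\ for every $x$,
\[
K_1(x,\cdot)\,m_1 \;=\; K_2(x,\cdot)\,m_2\qquad\text{as finite Borel measures on }M.
\]

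Next, I would exploit that $\mu_1$ and $\mu_2$ are mutually absolutely continuous on compact $M$ (both being smooth Riemannian volumes), and $p_i>0$ is continuous, so $m_1,m_2$ are mutually absolutely continuous with a continuous, strictly positive Radon--Nikodym derivative $w:=dm_2/dm_1$. The displayed measure identity then forces, by continuity of all factors and full support of $m_1$, the pointwise equality
\[
K_1(x,y)\;=\;K_2(x,y)\,w(y)\qquad\text{for all }x,y\in M.
\]
Setting $y=x$ and using $K_1(x,x)=K_2(x,x)=1$ gives $w(x)=1$ for every $x$; hence $w\equiv 1$, which yields both $m_1=m_2$ \emph{and} $K_1\equiv K_2$. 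The latter reads $\|\iota_1(x)-\iota_1(y)\|=\|\iota_2(x)-\iota_2(y)\|$ for all $x,y\in M$, which is the first claim.

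Finally, to recover the metrics I would apply the Taylor--expansion argument of \textbf{Step~3} in the proof of Theorem~\ref{thm:extrinsic_induced_metric_identifiable}: for any $x\in M$ and $v\in T_xM$, choosing a $C^2$ curve $\gamma$ with $\gamma(0)=x$, $\dot\gamma(0)=v$, the identity $\|\iota_i(\gamma(s))-\iota_i(x)\|^2 = s^2\|d\iota_i(x)[v]\|^2 + o(s^2)$ together with Step~2 gives $\|d\iota_1(x)[v]\|=\|d\iota_2(x)[v]\|$, i.e.\ $g_1(x)(v,v)=g_2(x)(v,v)$; polarization then yields $g_1=g_2$ and hence $\mu_1=\mu_2$. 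Combined with $m_1=m_2$ established above, this forces $p_1\,\mu_1 = p_2\,\mu_1$, so $p_1=p_2$ since $\mu_1$ has full support and $p_1-p_2$ is continuous. The only step requiring any care is the diagonal normalization $w\equiv 1$; everything else is a straightforward reassembly of previously established arguments, so I do not anticipate a real obstacle.
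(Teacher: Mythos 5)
Your proposal is correct and follows essentially the same route as the paper's own proof: Step 1 of Theorem~\ref{thm:intrinsic_metric_identifiable} to get equality of the measures $K_1(x,\cdot)\,m_1=K_2(x,\cdot)\,m_2$, the Radon--Nikodym/diagonal-evaluation argument to force $w\equiv 1$ (hence $m_1=m_2$ and $K_1\equiv K_2$), the Taylor-expansion argument of Step~3 of Theorem~\ref{thm:extrinsic_induced_metric_identifiable} to conclude $g_1=g_2$, and finally $p_1=p_2$ from $m_1=m_2$ and $\mu_1=\mu_2$. Your explicit remarks on the continuity of $w$ and on polarization only make explicit what the paper leaves implicit.
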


\begin{proof}
Fix $x\in M$. As in \textbf{Step~1} of Theorem~\ref{thm:intrinsic_metric_identifiable}, operator equality implies equality of the associated
integral operators $T_i$, hence the equality of finite Borel measures
\(
K_1(x,\cdot)\,dm_1 \;=\; K_2(x,\cdot)\,dm_2,
 dm_i:=p_i\,d\mu_i .
\)
Since $p_i>0$ and $\mu_i, m_i$ have full support, and $m_i$ are mutually absolutely continuous, write $dm_2=w\,dm_1$ with $w>0$ $m_1$-a.e.
Arguing exactly as in \textbf{Step~2} of Theorem~\ref{thm:intrinsic_metric_identifiable},
we obtain the pointwise identity
\(
K_1(x,y)=K_2(x,y)\,w(y) \forall x,y\in M.
\)
Setting $x=y$ yields $1=1\cdot w(y)$ for all $y$, hence $w\equiv 1$ and therefore $K_1\equiv K_2$. Consequently
\(
\|\iota_1(x)-\iota_1(y)\|=\|\iota_2(x)-\iota_2(y)\|\forall x,y\in M.
\)
By \textbf{Step~3} of Theorem~\ref{thm:extrinsic_induced_metric_identifiable}, $g_1=g_2$, hence $\mu_1=\mu_2$.
Finally, $dm_1=dm_2$ and $\mu_1=\mu_2$ imply $p_1=p_2$.
\end{proof}

\bibliographystyle{plainnat}   
\bibliography{references}      

\end{document}